\newcommand\subst[1]{\big|_{#1}}
\newcommand\divgs{\nabla_s\cdot}
\newcommand\grads{\nabla_s}
\newcommand\pdev[1]{\frac{\partial}{\partial #1}}
\newcommand\piola[1]{\mathop{\mathcal{P}_{#1}}}
\newcommand\reals{\mathbb{R}}
\newcommand\supp{\mathrm{supp}\,}
\newcommand\Tq{{T_q}}
\newcommand\Tp{{T_p}}
\newcommand\TRI{\mathcal{T}}
\newcommand\RT{\mathrm{RT}}
\newcommand\hh{\mathbf{H}}
\newcommand\ee{\mathbf{E}}
\newcommand\jcurr{\mathbf{J}}
\newcommand\uu{\mathbf{u}}
\newcommand\vv{\mathbf{v}}
\newcommand\nor{\hat{\mathbf{n}}}
\newcommand\Ttau{\boldsymbol{\tau}}
\newcommand{\xx}{\boldsymbol{\mathrm{x}}}
\newcommand{\aalpha}{\boldsymbol{\alpha}}
\newcommand{\Aa}{\boldsymbol{\mathrm{A}}}
\newcommand{\bb}{\boldsymbol{\mathrm{b}}}
\newtheorem{lem}{Lemma}
\title{An elementary formula for computing shape derivatives of EFIE
  system matrix} 
  \author{Kataja, Juhani\thanks{\small Aalto University,
    Dept. Rad. Sci. and Eng.,
      P.O. Box 13000 FI-00076 AALTO, Finland
      \small\texttt{juhani.kataja@aalto.fi}}\, \thanks{Author was funded by Academy of Finland.}
\and Toivanen, Jukka I. \thanks{\small University of Jyväskylä, PL 35 (Agora), 
               FI-40014 Jyväskylän yliopisto, Finland
               \texttt{jukka.i.toivanen@jyu.fi}}}
\begin{document}
\maketitle
\begin{abstract}
  We derive analytical shape derivative formulas of the system matrix representing
  electric field integral equation discretized with
  Raviart-Thomas basis functions. The arising integrals are easy to compute with similar
  methods as the entries of the original system matrix.
  The results are compared to derivatives
  computed with automatic differentiation technique and finite
  differences, and are found to be in excellent agreement.
\end{abstract}
\section{Introduction}

The adjoint variable methods based on shape gradients have recently
gained significant attention in the shape optimization of microwave
devices \cite{nikolova2006_avm,Toivanen2009ESA}. The main reason for
such an interest is that the adjoint variable approach eases
significantly the computational burden of the optimization process:
computing the gradient of the objective requires only
one solve of the governing state equations, and in addition, at most
one solve of the adjoint problem.

The adjoint variable methods rely on the availability of the
derivatives of the system matrix with respect to the control
parameters. Traditionally these derivatives have been computed with finite
difference (FD) formulas. 
The main drawback of this approach is that the
accuracy greatly depends on the step length parameter, which has to be
carefully chosen in order to find a suitable balance between round off
and truncation errors.

Use of the automatic differentiation (AD) 
was proposed in \cite{Toivanen2009ESA}
to compute the derivatives  
of the system matrix arising from a method of moments 
discretization of the electric field integral equation (EFIE). 
The  method was applied to antenna shape optimization
problems in \cite{Toivanen2009ESA, uwb_splines}.
AD computes the exact derivatives of the computer code,
but there are some complications involved.

AD can be implemented either as a source transformation tool, or using
operator overloading. Source transformation tools are complicated to
implement, and existing tools are available only for some programming
languages. While using these tools, one may be forced to restructure
the code, and avoid such features of the programming language that are
not supported. Tools based on the operator overloading are more simple
to implement, but not all programming languages support operator
overloading. Moreover, there is some execution overhead related to
this approach. In both cases, AD has to be applied also to all
subroutines that are called from the code, which may present
difficulties if one wishes to use any external libraries.

The electric field integral equation is a widely used method to
compute scattering of time harmonic electromagnetic field from
perfectly electrically conducting (PEC) bodies and surfaces
\cite{efie,efie_christiansen,kolundzija-djordjevic}.
In this paper we derive simple analytical formulas for calculating the
system matrix derivatives of the discretized  EFIE system. 
The arising integrals are such that they are
easy to compute using methods similar to those used to calculate the
elements of the original system matrix. The results are compared
against derivatives calculated using automatic differentiation and
difference formulas.

We employ the lowest order Raviart-Thomas (RT) basis functions
\cite{raviart-thomas} in the discretization. They are almost identical
to the Rao-Wilton-Glisson (RWG) basis functions \cite{rwg}, which are
the usual choice to discretize the EFIE, except that usually the RWG
functions are scaled with the length of the edge which they are
related to.

The shape derivatives of the electromagnetic field solution together
with the far field pattern has been characterized and analyzed in
\cite{costabel_shape_deriv,costabel_shape_deriv_II,potthast_dom_deriv}. 
Contrary to these works, we consider differentiation of the
discretized system, and utilize the adjoint 
variable method.

\section{Preliminaries}
Let $S$ be a surface with or without boundary in $\reals^3$ and $\TRI$
its triangulation. It can be a boundary of a some sufficiently regular
open domain in $\reals^3$, in which case it models the scattering
object. If it is not a boundary of any set, but it otherwise exhibits
sufficient regularity, it acts as a model for a thin perfectly
conducting screen.

In what follows, $\RT(\TRI)$ denotes the space spanned by the 
lowest order Raviart-Thomas (RT) basis functions on $\TRI$,
$\divgs$ denotes the surface divergence, and $\grads$
denotes the surface gradient.

In the EFIE the unknown function to be solved is, roughly speaking,
the equivalent surface current $\jcurr=\nor\times\hh$, where $\nor$ is
the surface unit normal and $\hh$ is the total magnetic field. The
source term in the equation is the incident electric field denoted by
$\ee_p$.

The current $\jcurr$ satisfies the Rumsey reaction principle
\cite{kolundzija-djordjevic,buffa_bem,schwab_bem,rumsey54}:

\emph{Find $\jcurr\in H^{-1/2}_\times(\divgs;S)$ s.t.}
\begin{align}
  \nonumber
  \frac{i}{\omega\epsilon} \int_S \divgs\uu(x) & \int_S g_k(x-y) \divgs\jcurr(y) dy dx -
  i\omega\mu \int_S  \uu(x) \cdot \int_S g_k(x-y) \jcurr(y) dy dx \\
  & = \int_S \uu(x) \cdot \ee_p(x) dx\quad \forall \uu\in H^{-1/2}_\times(\divgs;S).  
  \label{equ:rumsey}
\end{align}
Here $g_k$ is the fundamental solution of the Helmholtz equation:
\begin{equation}
g_k(x) = \frac{e^{ik|x|}}{4\pi|x|},
\end{equation}
and $\divgs$ is the surface divergence. We denote the Euclidean norm
of $x\in\reals^3$ by $|x|$.

For detailed treatment of the space $H^{-1/2}_\times(\divgs;S)$ and
the above equation we refer to \cite{buffa_bem}. We
note that the integrals in \eqref{equ:rumsey} should be interpreted as
a certain duality pairing, but at the discrete level they are proper
integrals.

After discretizing the equation \eqref{equ:rumsey} we arrive to the
following finite dimensional variational problem:

\emph{Find $\jcurr_h\in\RT$ such that}
\begin{equation}
  a(\vv,\jcurr_h) = \int_S\vv(x)\cdot\ee_p(x) dx,\quad \forall \vv\in\RT(\TRI),
  \label{equ:rumsey_finite}  
\end{equation}
where the bilinear form $a$ is given by
\begin{align}
  a(\vv,\uu) = &\frac{i}{\omega\epsilon} \int_S \divgs\vv(x)
  \int_S g_k(x-y) \divgs\uu(y) dy dx - \nonumber \\
  &i\omega\mu \int_S \vv(x) \cdot \int_S g_k(x-y) \uu(y) dy dx.
\end{align}
We denote the corresponding linear system of equations of
\eqref{equ:rumsey_finite} by
\begin{equation}
  \Aa\xx = \bb.
\end{equation}

Let $T_p,T_q\in\TRI$, $\uu,\vv\in\RT(\TRI)$, and $\supp \uu$ and
$\supp\vv$ intersect $T_p$ and $T_q$, resp. In order to compute the
entries of $\Aa$ one needs to compute integrals of type
\begin{align}
  \label{equ:curr_efie}
  I_1(\uu,\vv;\Tp,\Tq) & = \int_\Tq \vv(x)\cdot \int_\Tp g_k(x-y) \uu(y) dy dx,\\
  \label{equ:charge_efie}
  I_2(\uu,\vv;\Tp,\Tq) & = \int_\Tq \divgs \vv(x) \int_\Tp g_k(x-y) \divgs \uu(y) dy dx.
\end{align}

We denote the usual interpolating nodal piecewise first order
polynomials on $S$ associated with $\TRI$ by $(\lambda_m)_{m=1}^N$,
where $N$ is the number of vertices in $\TRI$.

\subsection{Adjoint variable methods for the EFIE system}
  
Let $\aalpha$ be the vector of design variables, and
$\mathcal{J}(\aalpha, \xx(\aalpha))$ be a real valued objective
function.  In general,
real valued functions are not differentiable with respect to complex
arguments in the conventional complex analytic sense.  Therefore we
differentiate $\mathcal{J}$ separately with respect to the real and
imaginary parts of the variables $\xx$:
\begin{equation}
\label{eq:dJ}
\frac{\rm d \mathcal{J}}{\rm d \alpha_k} 
= \frac{\partial \mathcal{J}}{\partial \alpha_k} 
+ \frac{\partial \mathcal{J}}{\partial \Re \xx} \frac{\partial \Re \xx}{\partial \alpha_k}
+ \frac{\partial \mathcal{J}}{\partial \Im \xx} \frac{\partial \Im \xx}{\partial \alpha_k} .
\end{equation}
Here $\partial \mathcal{J} / \partial \alpha_k$ reflects the explicit dependence of
$\mathcal{J}$ on the design.

The convention
$\nabla_{\xx} \mathcal{J} := \nabla_{\Re \xx} \mathcal{J} + i \nabla_{\Im \xx} \mathcal{J}$
will be used to simplify notation.
It can be easily checked that equation \eqref{eq:dJ} can now be written as
\begin{equation}
\label{eq:dJs}
\frac{\rm d \mathcal{J}}{\rm d \alpha_k} 
= \frac{\partial \mathcal{J}}{\partial \alpha_k} 
+ \Re \left[ ( \nabla_{\xx} \mathcal{J} )^H \frac{\partial \xx}{\partial \alpha_k} \right].
\end{equation}
By differentiating the state relation $\Aa \xx = \bb$, we obtain
\begin{equation}
\label{eq:state_der}
\Aa \frac{\partial \xx}{\partial \alpha_k} = -\frac{\partial \Aa}{\partial \alpha_k} \xx
+  \frac{\partial \bb}{\partial \alpha_k}.
\end{equation}
In the so called direct differentiation approach, the derivatives
$\partial \xx / \partial \alpha_k$ are solved from
\eqref{eq:state_der}, and \eqref{eq:dJs} is used to compute the
gradient of the objective function. However, the right hand side of
\eqref{eq:state_der} is different for each design variable, which
makes this approach rather inefficient.

By introducing the adjoint problem
\begin{equation}
\label{eq:adjoint}
A^H \gamma = \nabla_{\xx} \mathcal{J}, 
\end{equation}
and using \eqref{eq:state_der},
equation \eqref{eq:dJs} can be written as
\begin{align}
\frac{\partial \mathcal{J}}{\partial \alpha_k} &= 
\frac{\partial \mathcal{J}}{\partial \alpha_k}  +
  \Re \left[ (\nabla_{\xx} \mathcal{J})^H \Aa^{-1} \left(\frac{\partial \bb}{\partial \alpha_k} 
 - \frac{\partial \Aa}{\partial \alpha_k} \xx \right)\right]  \\
&= \frac{\partial \mathcal{J}}{\partial \alpha_k} 
 + \Re \left[ (\gamma^H \left(\frac{\partial \bb}{\partial \alpha_k} 
 - \frac{\partial \Aa}{\partial \alpha_k} \xx \right)\right]. \label{eq:adj_sens}
\end{align}
In the adjoint variable method, equation \eqref{eq:adjoint} is solved for $\gamma$, and 
equation \eqref{eq:adj_sens} is used to compute the gradient of the objective.
Notice that the adjoint problem \eqref{eq:adjoint} does not depend on the
design, and therefore $\gamma$ is the same for all design variables.
In case of some particular objective functions (see e.g. \cite{nikolova2006_avm}), 
the adjoint vector $\gamma$ can be
immediately obtained from the solution vector $\xx$, and one does not have to solve
the adjoint problem at all.

In any case, the derivatives of the right hand side and the system
matrix with respect to the design variables are required.
This paper presents analytical formulas, which can be used to compute
them efficiently in the case where $\Aa$ is a system matrix arising
from the EFIE formulation \eqref{equ:rumsey_finite}.

\section{The derivative formulas}
We define the shape deforming mapping with the aid of the global nodal
shape functions $\lambda_m$ as follows.

Let $\Ttau\in \reals^3$ be a vector and $0\leq s<1$. We define the
deformation mapping $F_s^m$ associated with node $m$ by
\begin{equation}
F_s^m(x) = x + s \Ttau \lambda_m(x).
\label{equ:Fs_char}
\end{equation}

Note that the domain of $F_s^m$ is the whole of $S$.  We shall drop the
superscript $m$ from $F_s^m$ whenever it does not cause confusion.

Let $K$ be a triangle in $\reals^2$ and $F$ a diffeomorphism from $K$
to its image. We define the Piola transform $\piola{F}$ associated
with $F$ by
\begin{equation}\piola{F}\uu = \frac{1}{\det F'} F' \uu \circ F^{-1},\end{equation}
which maps smooth sections of the tangent bundle of $K$ to those of
$F(K)$. Here the Jacobian of $F$ by is denoted by $F'$. We note that
the divergence of the Piola transformed function $\uu$ is given by
\begin{equation}\divgs \piola{F}\uu = \frac{1}{\det F'} (\divgs
  \uu)\circ F^{-1}.\end{equation}
This simplifies the calculations greatly, as we shall see.

The Raviart-Thomas basis functions can be defined using the Piola
transform and reference element as follows. Let $\widehat{K}$ be the
standard 2-simplex \[\{ (x,y)\in\reals^2:\, 0<x,\, 0<y,\,
\mathrm{and}\, 0<x+y<1\}\] and define functions $\widehat{\uu}_i$ on
$\widehat{K}$ by
\begin{equation}\begin{cases}
\widehat{\uu}_1(x,y) = [x,\, 1-y]^T,& \\
\widehat{\uu}_2(x,y) = [x,\, y]^T& \mathrm{and}\\
\widehat{\uu}_3(x,y) = [1-x,\, y]^T.
\end{cases}\end{equation}

Now the restriction of an arbitrary $\uu\in\RT(\TRI)$ to the triangle
$K$ is a Piola transform of some $\widehat{\uu}_i$ ie.
\[\uu = \piola{F_K} \widehat{\uu}_i\]
for some $i\in\{1,2,3\}$, where $F_K$ is the affine mapping which maps
$\widehat{K}$ to $K$. We note that the the determinant of $F_K'$ is
given by $\det F_K' = \pm 2 |K|$, where $|K|$ is the area of $K$ and
the sign is determined by its orientation.

The main objects of interest in this work are the derivatives
\begin{align}
\pdev{s} I_m(\piola{F_S}\uu,\piola{F_s}\vv; F_s(\Tp),F_s(\Tq))\subst{s=0},\quad m = 1,2.
\end{align}

To start with, we note that
\begin{align}
  I_1&(\piola{F_s} \uu, \piola{F_s} \vv;  F_s(\Tp), F_s(\Tq)) = \nonumber \\
  &\int_{F_s(\Tq)}\int_{F_s(\Tp)} \frac{ F_s'(x)\vv \circ F_s^{-1}(x)}{\det F_s'(x)} \cdot 
  \frac{F_s'(y)\uu\circ F_s^{-1} (y)}{\det F_s'(y)}  g_k(x-y) dy dx = \nonumber \\
  &\int_\Tq\int_\Tp F_s'(x) \vv(x) \cdot F_s'(y) \uu(y) g_k(F_s(x)-F_s(y)) dy dx.\label{equ:I1big}
\end{align}
and
\begin{align}
I_2&(\piola{F_s} \uu, \piola{F_s} \vv;  F_s(\Tp), F_s(\Tq)) = \nonumber \\
&\int_{F_s(\Tq)}\int_{F_s(\Tp)} \frac{(\divgs \vv) \circ F_s^{-1}(x)}{\det F_s'(x)} \cdot 
  \frac{(\divgs \uu)\circ F_s^{-1} (y) }{\det F_s'(y)} g_k(x-y) dy dx = \nonumber \\
  &\int_\Tq\int_\Tp \divgs \vv(x) \cdot \divgs \uu(y) g_k(F_s(x)-F_s(y)) dy dx.\label{equ:I2big}
\end{align}
For conciseness, we denote $I_m^s = I_m(\piola{F_s} \uu, \piola{F_s}
\vv; F_s(\Tp), F_s(\Tq))$.

Looking at \eqref{equ:I1big} and \eqref{equ:I2big} we find that we
only need to compute derivatives of $g_k$ and $F'_s$ wrt. $s$.
\begin{lem} Let $F_s^m$ be given by Eq. \eqref{equ:Fs_char}.
  It holds that \begin{equation}
    \pdev{s}|F_s^m(x)-F_s^m(y)|\subst{s=0}   = \frac{(x-y)\cdot \Ttau
    (\lambda_m(x)-\lambda_m(y))}{|x-y|}\end{equation}
and
\begin{equation}
  \pdev{s} F_s'(x)^T F_s'(y)\subst{s=0} = (\Ttau \grads \lambda_m(x))^T + \Ttau\grads\lambda_m(y).
\end{equation}

\end{lem}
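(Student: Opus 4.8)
The plan is to make the $s$-dependence of each quantity explicit and then differentiate by elementary calculus; the structural fact that drives everything is that $F_s^m$ is \emph{affine} in $s$, and consequently so are the vector $F_s^m(x)-F_s^m(y)$ and the Jacobian $F_s'$.

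For the first identity I would set $v(s):=F_s^m(x)-F_s^m(y)=(x-y)+s\,\Ttau\,(\lambda_m(x)-\lambda_m(y))$, note that this is affine in $s$ with $v(0)=x-y\neq 0$ (the points $x,y$ are distinct, as is implicit wherever $g_k(x-y)$ appears and in particular on the integration domains under consideration), so that $s\mapsto|v(s)|$ is smooth near $s=0$, and then differentiate the identity $|v(s)|^2=v(s)\cdot v(s)$ to obtain $|v(s)|\,\pdev{s}|v(s)|=v(s)\cdot v'(s)$. Evaluating at $s=0$ with $v'(0)=\Ttau(\lambda_m(x)-\lambda_m(y))$ gives the claimed expression directly.

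For the second identity I would first record the Jacobian: differentiating $F_s^m(x)=x+s\,\Ttau\,\lambda_m(x)$ in the space variable, which commutes with the affine dependence on $s$, yields $F_s'(x)=\id+s\,\Ttau\,\grads\lambda_m(x)$, where $\Ttau\,\grads\lambda_m(x)$ is read as the rank-one matrix formed from the column vector $\Ttau$ and the (row) surface gradient $\grads\lambda_m(x)$. Writing $M(x):=\Ttau\,\grads\lambda_m(x)$ and expanding,
\[
F_s'(x)^T F_s'(y)=(\id+sM(x))^T(\id+sM(y))=\id+s\bigl(M(x)^T+M(y)\bigr)+s^2 M(x)^T M(y),
\]
which is quadratic in $s$; differentiating and setting $s=0$ annihilates the quadratic term and leaves $M(x)^T+M(y)=(\Ttau\,\grads\lambda_m(x))^T+\Ttau\,\grads\lambda_m(y)$, as required.

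The computation itself is routine, so the only real point of care — what I would flag as the main obstacle — is pinning down the meaning of the Jacobian on the surface $S$, since $\Ttau$ need not be tangent to $S$: one should read $F_s'$ either as the ambient $3\times 3$ matrix above, with $\grads\lambda_m$ the tangential gradient, or, after fixing a chart such as the affine reference map $F_K$, as the corresponding $3\times 2$ representative. The displayed expansion is insensitive to this choice, and since both differentiated quantities are manifestly smooth — indeed polynomial or affine — in $s$ on a neighbourhood of $0$, termwise differentiation at $s=0$ needs no further justification.
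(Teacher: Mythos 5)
Your proof is correct and follows essentially the same route as the paper: both identities are obtained by exploiting that $F_s^m$ is affine in $s$, differentiating $|v(s)|^2=v(s)\cdot v(s)$ (the paper expands the square explicitly and invokes the chain rule, which is the same computation) and expanding $F_s'(x)^TF_s'(y)$ from $F_s'=\id+s\,\Ttau\grads\lambda_m$. Your added remarks on $x\neq y$ and on the interpretation of the surface Jacobian are sensible points of care but do not change the argument.
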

\begin{proof} Just by calculating
\begin{align*}
  |F_s^m(x)-F_s^m(y)|^2 & = |x-y + s\Ttau\left(\lambda_m(x)-\lambda_m(y)\right)|^2 \\
  & = \, A + 2s (x-y)\cdot \Ttau (\lambda_m(x)-\lambda_m(y)) + B s^2,
\end{align*}
where $A$ and $B$ are constants wrt. $s$. By using chain rule, the first assertion follows.

For the second formula, we note that
\begin{equation}F_s'(x) = I + s\Ttau\grads\lambda_m(x)\end{equation}
and the assertion is obvious.
\end{proof}

Using the above results we have the shape derivative of the kernel
$g_k$ as
\begin{equation}
  \pdev{s} g_k(F_s(x)-F_s(y))\subst{s=0} = 
  g_k(x-y)\left(i k - \frac{1}{|x-y|}\right) \frac{(x-y)\cdot \Ttau
    (\lambda_m(x)-\lambda_m(y))}{|x-y|}.
  \label{equ:kernel_deriv}
\end{equation}
Furthermore, this can be simplified to
\[\pdev{s} g_k(F_s(x)-F_s(y))\subst{s=0} = \Ttau\cdot(\nabla g_k)(x-y)(\lambda_m(x)-\lambda_m(y)).\]
Thus, we immediately get the formula
\begin{equation}\boxed{
  \pdev{s} I_2^s = \int_\Tq\int_\Tp \divgs \vv(x) \divgs \uu(y)
  \Ttau\cdot(\nabla g_k)(x-y) \left(\lambda_m(x)-\lambda_m(y)\right)dy dx.} \label{equ:diffcharge}
\end{equation}

The derivative $\pdev{s} I_1^s$ is only slightly more complicated;
it is obtained by an application of the Leibniz's rule:
\begin{align}
  \pdev{s} I_1^s & = \int_\Tq\int_\Tp  \left(\pdev{s} \vv(x) \cdot F_s'(x)^T F_S'(y) \uu(y)\right) g_k(x-y) + \nonumber\\
& \hspace{1cm}\vv(x) \cdot \uu(y) \pdev{s} g_k(F_s(x)-F_s(y)) dy dx.
\end{align}
Thus we obtain
\begin{empheq}[box=\fbox]{align}
\pdev{s} I_1^s & = \int_\Tq\int_\Tp \vv(x)\cdot\left((\Ttau \grads \lambda_m(x))^T + \Ttau\grads\lambda_m(y)\right)\cdot \uu(y) g_k(x-y)+\nonumber \\
& \hspace{1cm} \vv(x)\cdot\uu(y)  \Ttau\cdot(\nabla g_k)(x-y) \left(\lambda_m(x)-\lambda_m(y)\right)dy dx.\label{equ:diffcurr}
\end{empheq}

\section{Numerical verifications}
We compare the results given by the formulas \eqref{equ:diffcurr} and
\eqref{equ:diffcharge} to the values of derivative computed with
difference formulas and automatic differentiation applied to numerical
code which computes $I_m^s$ at $s=0$.

To start with, we note that the integrals \eqref{equ:diffcurr} and
\eqref{equ:diffcharge} are singular when the closures of $T_p$ and
$T_q$ intersect. Thus we manipulate the expressions in such a way that we
can directly apply singularity subtraction methods to compute
them.

The singularity subtraction method is designed to calculate singular
integrals of the form
\begin{equation}
\int v(x) \int K(x-y) u(y) dy dx,
\end{equation}
where $K$ is singular at $0$. The idea is to express $K$ as a sum of
singular $K_s$ and a function $K_b$ bounded at $0$, in such a way that
$\int K_s(x-y) v(x)dx$ can be evaluated analytically, and $\int
K_b(x-y) v(x) dx$ integrates easily with numerical quadratures. In
this paper, we use the same quadrature to integrate $K = K_s + K_b$
wrt.\ $y$ and the inner integral with kernel $K_b$ wrt.\ $x$. Note
that $\int K_b(x-y) v(x) dx$ is integrated analytically.

Looking at equations \eqref{equ:diffcharge} and \eqref{equ:diffcurr}
we observe that we only need to compute integrals of type
\begin{equation}
\begin{cases}
  \displaystyle\int\divgs\vv(x) \int \Ttau\cdot\nabla_x g_k(x-y)
  \left(\lambda_m(x)-\lambda_m(y)\right)\divgs\uu(y) dy dx& \\
  \displaystyle\int \vv(x)\cdot \int \Ttau\cdot\nabla_x g_k(x-y)\left(\lambda_m(x)-\lambda_m(y)\right)\uu(y) dy dx & \\
  \displaystyle\int\int \vv(x)\cdot T(x,y)\uu(y) g_k(x-y) dy dx
\end{cases}
\end{equation}

Here $T(x,y) = (\Ttau \grads \lambda_m(x))^T +
\Ttau\grads\lambda_m(y)$. Looking at these, we find that they can be
calculated using singularity subtraction techniques discussed e.g. in
\cite{singsubshape}.

In the following we shall compare numerical values of
\begin{equation}
\label{equ:pdev_s_I}
\pdev{s} I_m^s\subst{s=0},\quad m = 1,2
\end{equation}
calculated with automatic differentiation applied to the singularity
subtraction technique, analytical formulas introduced here, and forward
difference formula. We inspect these values with four different
triangle configurations that occur in computations: the triangles
$T_p$ and $T_q$ do not touch, they share a vertex, they share an edge,
and the triangles are equal.
These four cases are presented in Figure \ref{fig:cases}.

The automatic differentiation algorithm generates code which computes
the derivatives of \eqref{equ:pdev_s_I}. The non-differentiated code
calculates the system matrix contributions of two triangles with
singularity subtraction technique by removing the two most singular
terms from the kernel of the EFIE integral. To get directly comparable results, we
also subtract two terms when computing the derivatives using
analytical formulas. For details and an example where the AD technique
is employed in EFIE calculations we refer to \cite{Toivanen2009ESA}.

We compute the derivatives of the local system matrix contributions
\begin{align}
  a_{pq}^s(\uu,\vv)= & 
  \frac{i}{\omega\varepsilon}I_2(\piola{F_s}\uu,\piola{F_s}\vv;F_s(T_p),F_s(T_q)) - \nonumber\\
  &i\omega\mu I_1(\piola{F_s}\uu,\piola{F_s}\vv;F_s(T_p),F_s(T_q)),
  \label{equ:diff_expr}
\end{align}
where $\uu$ and $\vv$ vary over three local $\RT$ basis functions on
$T_p$ and $T_q$ respectively, and investigate the effect of changing the
number of integration points $n$. The quadrature rules were adapted
from \cite{dunavant85}.

In Tables \ref{tab:near_imag}--\ref{tab:same_imag} we inspect the sum
\begin{equation}
\label{equ:sum_expr}
\Im \sum_{\uu,\vv} \pdev{s}a_{pq}^s(\uu,\vv),
\end{equation}
computed with the AD method, analytical formulas and the FD method given by
\begin{equation}
f'(s) \approx (f(s+h)-f(s))/h.
\end{equation}
The step length parameter was chosen to be $h=10^{-8}$. The reason for
presenting only the imaginary part is that it is the most involved
integral to compute.

In Tables \ref{tab:near_rel}--\ref{tab:same_rel} we have the difference in
the Frobenius norm of the $3\times 3$ matrices $\pdev{s} a_{pq}^s$
relative to the values computed using the present methods.

We see that the proposed analytical formulas and the automatic differentiation 
approach produce same results up to numerical precision. 
Relative difference of the derivatives obtained with the FD
method is of the order $10^{-7}$. This is expected, since
the forward finite difference formula has a truncation error
of the order $\mathcal{O}(h)$.

\section{Concluding remarks}
We have derived compact analytical formulas to calculate the shape derivatives
of the EFIE system matrix arising from RT discretization.
The derivatives are easy
to implement, since the emerging integrals can
be calculated using similar algorithms as the original EFIE system
matrix. In this case we used singularity subtraction technique, but 
the use of the singularity cancellation methods \cite{singcanc}
could be possible as well.

We have shown that the shape derivatives are
the same, up to numerical precision, as the ones computed using
automatic differentiation (AD), if one uses the same techniques
to evaluate the arising integrals. On the other hand,
AD is known to produce the exact derivatives of the given computer
realization. Such derivatives are perfectly consistent with the
objective function values, which is very convenient from the
optimization perspective.
The accuracy of the derivatives is naturally dictated
by the utilized numerical methods, and depends heavily for example
on the number of points used for the numerical integration.

The analytical formulas offer several benefits over the
automatic differentiation approach.
Existing AD tools are available only for some programming
languages, and while using these tools, one may be forced to restructure
the code and avoid some features of the programming language. 
Moreover, AD has to be applied also to all
subroutines that are called from the code, which may present
difficulties if one wishes to use any external libraries.
Finally, implementing the analytical formula is likely to 
lead to better computational performance, as one avoids execution
overhead related to some AD techniques, and the programmer is free to
optimize the code.

The FD derivatives are approximate by nature,
and their accuracy greatly depends on the step length parameter.
From the numerical results we can conclude that
the derivatives computed with the proposed formulas are in agreement with
the finite difference derivatives.
Use of the analytical formulas is therefore preferred, because 
the need to choose a step length parameter is avoided.

By having an analytical formula at hand, one is free to use any
suitable method for its numerical evaluation. Thus it is easy to make
a trade-off between accuracy and computational efficiency.  Moreover,
the analytical formulas provide a basis for understanding the
behaviour of the derivatives.

\begin{figure}
  \begin{center}
    \subfloat[\texttt{near}]{\includegraphics{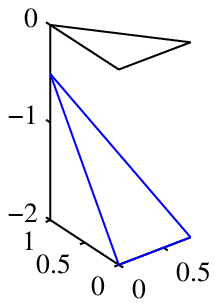}}
    \subfloat[\texttt{point}]{\includegraphics{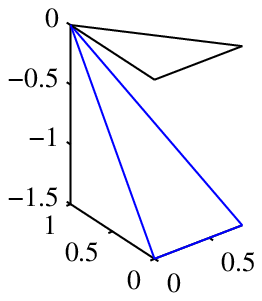}}
    \subfloat[\texttt{edge}]{\includegraphics{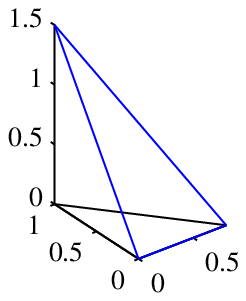}}
    \subfloat[\texttt{same}]{\includegraphics{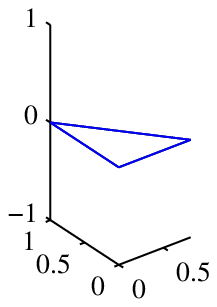}}
    \caption{Three different configurations of triangles $T_p$ and
      $T_q$ triangles. The triangle $T_q$ remains still in each of the
      cases.}
    \label{fig:cases}
    \end{center}
\end{figure}

\begin{table}
\centering
\caption{The imaginary part of $I(\uu,\vv)$ defined by the equation \eqref{equ:sum_expr}}
\subfloat[\texttt{near}\label{tab:near_imag}]{
\begin{tabular}{c|c|c|c}
  $n$ & Analytical & AD & FD \\
\hline
\texttt{6} & \texttt{2.98169473e+01} & \texttt{2.98169473e+01} & \texttt{2.98169583e+01} \\ 
\texttt{16} & \texttt{2.98097099e+01} & \texttt{2.98097099e+01} & \texttt{2.98097110e+01} \\ 
\texttt{61} & \texttt{2.98096873e+01} & \texttt{2.98096873e+01} & \texttt{2.98096885e+01} \\ 
\texttt{85} & \texttt{2.98096873e+01} & \texttt{2.98096873e+01} & \texttt{2.98096876e+01} \\ 

\end{tabular}}\hspace{0.1cm}
\subfloat[\texttt{point}]{\label{tab:point_imag}
\begin{tabular}{c|c|c|c}
  $n$ & Analytical & AD & FD \\
\hline
\texttt{6} & \texttt{-6.13463371e+00} & \texttt{-6.13463371e+00} & \texttt{-6.13463194e+00} \\ 
\texttt{16} & \texttt{-6.11420678e+00} & \texttt{-6.11420678e+00} & \texttt{-6.11420794e+00} \\ 
\texttt{61} & \texttt{-6.11421020e+00} & \texttt{-6.11421020e+00} & \texttt{-6.11421161e+00} \\ 
\texttt{85} & \texttt{-6.11416794e+00} & \texttt{-6.11416794e+00} & \texttt{-6.11417224e+00} \\ 

\end{tabular}}\\
\subfloat[\texttt{edge}]{\label{tab:edge_imag}
\begin{tabular}{c|c|c|c}
  $n$ & Analytical & AD & FD \\
\hline
\texttt{6} & \texttt{-7.84966165e+01} & \texttt{-7.84966165e+01} & \texttt{-7.84966142e+01} \\ 
\texttt{16} & \texttt{-7.67762767e+01} & \texttt{-7.67762767e+01} & \texttt{-7.67762887e+01} \\ 
\texttt{61} & \texttt{-7.68970360e+01} & \texttt{-7.68970360e+01} & \texttt{-7.68970451e+01} \\ 
\texttt{85} & \texttt{-7.68875833e+01} & \texttt{-7.68875833e+01} & \texttt{-7.68875768e+01} \\ 

\end{tabular}}\hspace{0.1cm}
\subfloat[\texttt{same}]{\label{tab:same_imag}
\begin{tabular}{c|c|c|c}
  $n$ & Analytical & AD & FD \\
\hline
\texttt{6} & \texttt{-5.98000685e+01} & \texttt{-5.98000685e+01} & \texttt{-5.98000312e+01} \\ 
\texttt{16} & \texttt{-5.86918000e+01} & \texttt{-5.86918000e+01} & \texttt{-5.86917977e+01} \\ 
\texttt{61} & \texttt{-5.87792043e+01} & \texttt{-5.87792043e+01} & \texttt{-5.87792522e+01} \\ 
\texttt{85} & \texttt{-5.87738744e+01} & \texttt{-5.87738744e+01} & \texttt{-5.87738958e+01} \\ 

\end{tabular}}
\end{table}

\begin{table}
\centering
\caption{Relative difference to analytical derivative}
\subfloat[\texttt{near}\label{tab:near_rel}]{
\begin{tabular}{c|c|c}
  $n$ & AD & FD \\
\hline
\texttt{6} & \texttt{1.10e-15} & \texttt{2.14e-07} \\ 
\texttt{16} & \texttt{1.11e-15} & \texttt{5.27e-08} \\ 
\texttt{61} & \texttt{1.85e-15} & \texttt{3.02e-08} \\ 
\texttt{85} & \texttt{2.52e-15} & \texttt{2.74e-08} \\ 

\end{tabular}}\hspace{0.1cm}
\subfloat[\texttt{point}]{\label{tab:point_rel}
\begin{tabular}{c|c|c}
  $n$ & AD & FD \\
\hline
\texttt{6} & \texttt{1.07e-15} & \texttt{1.30e-07} \\ 
\texttt{16} & \texttt{7.25e-16} & \texttt{9.42e-08} \\ 
\texttt{61} & \texttt{2.05e-15} & \texttt{4.73e-08} \\ 
\texttt{85} & \texttt{1.24e-15} & \texttt{1.57e-07} \\ 

\end{tabular}}\\
\subfloat[\texttt{edge}]{\label{tab:edge_rel}
\begin{tabular}{c|c|c}
  $n$ & AD & FD \\
\hline
\texttt{6} & \texttt{5.53e-16} & \texttt{3.51e-08} \\ 
\texttt{16} & \texttt{3.21e-15} & \texttt{1.16e-07} \\ 
\texttt{61} & \texttt{7.59e-15} & \texttt{9.78e-08} \\ 
\texttt{85} & \texttt{7.82e-15} & \texttt{6.34e-08} \\ 

\end{tabular}}\hspace{0.1cm}
\subfloat[\texttt{same}]{\label{tab:same_rel}
\begin{tabular}{c|c|c}
  $n$ & AD & FD \\
\hline
\texttt{6} & \texttt{6.13e-15} & \texttt{4.17e-07} \\ 
\texttt{16} & \texttt{1.84e-15} & \texttt{9.87e-08} \\ 
\texttt{61} & \texttt{3.06e-15} & \texttt{5.39e-07} \\ 
\texttt{85} & \texttt{1.08e-15} & \texttt{2.44e-07} \\ 

\end{tabular}}

\end{table}

\pagebreak

\bibliographystyle{plain}
\bibliography{defie_anal}

\end{document}